\newtheorem{Theorem}{Theorem}
\newtheorem{Corollary}[Theorem]{Corollary}
\begin{document}
\title{On the degree distribution of Haros graphs}
\author{Jorge Calero-Sanz$^{1,2}$}
\email{jorge.calero@urjc.es}
\affiliation{$^1$ Departamento de Matem\'atica Aplicada a la Ingenier\'ia Aeroespacial, ETSI Aeron\'autica y del Espacio, Universidad Polit\'ecnica de Madrid, Madrid, Spain;\\$^2$Signal and Communications Theory and Telematic Systems and Computing, Rey Juan Carlos University, Madrid, Spain.}
\date{\today}

\begin{abstract}
Haros graphs is a graph-theoretical representation of real numbers in the unit interval. The degree distribution of the Haros graphs provides information regarding the topological structure and the associated real number. This article provides a comprehensive demonstration of a conjecture concerning the analytical formulation of the degree distribution. Specifically, a theorem outlines the relationship between Haros graphs, the corresponding continued fraction of its associated real number, and the subsequent symbolic paths in the Farey binary Tree. Moreover, an expression continuous and piece-wise linear in subintervals defined by Farey fractions can be derived from an additional conclusion for the degree distribution of Haros graphs.
\end{abstract}

\keywords{Graph theory,  degree distribution, continued fraction, complex networks}
\maketitle
\newpage

\section{Introduction}
The study of the structure of real numbers has been approached from a variety of perspectives \cite{Hardy, Niqui, Vuillemin, Angell}. The representation by continued fractions and the representation through the Farey Tree are examples of canonical representations \cite{AdamczewskiII, Khinchin, Bonnano_Isola}. Recent graph-theoretical research has provided a new representation of real numbers using Haros graphs \cite{HarosPaper}. These graphs are swayed by the approach of Horizontal Visibility Graphs to the quasiperiodic route \cite{FromTime, FeigenbaumGraphs, AnalyticalFeigenbaum, intermit}. Furthermore, Haros graph are  similar to other structures, such as Farey graphs \cite{Singerman, Kurkofka, Kurkofka2}.  Haros graphs provide a graph description of the unit interval $[0,1]$ establishing a one-to-one correspondence with the well-known Farey sequences $\mathcal{F}_n$, where  $$\mathcal{F}_{n} = \left\lbrace\frac{p}{q} \in [0,1]:\ 0\leq p \leq q \leq n, \ (p,q) = 1 \right\rbrace.$$
The Haros graph set $\mathcal{G}$ is generated recursively from an initial graph (defined as two nodes joined by an edge) and the concatenation graph-operator $\oplus$ (shown in Fig. \ref{Fig_1}). Hence, the set  $\mathcal{G}$ may be represented as a binary tree (see Fig. \ref{Fig_1}). Since $\lim_{n} \mathcal{F}_n =  [0,1] $, the bijection can be extended to the unit interval, where rational numbers are associated with finite Haros graphs and the irrational numbers correspond to infinite Haros graphs.  \\

\begin{figure*}[h!]
\includegraphics[width=1\textwidth]{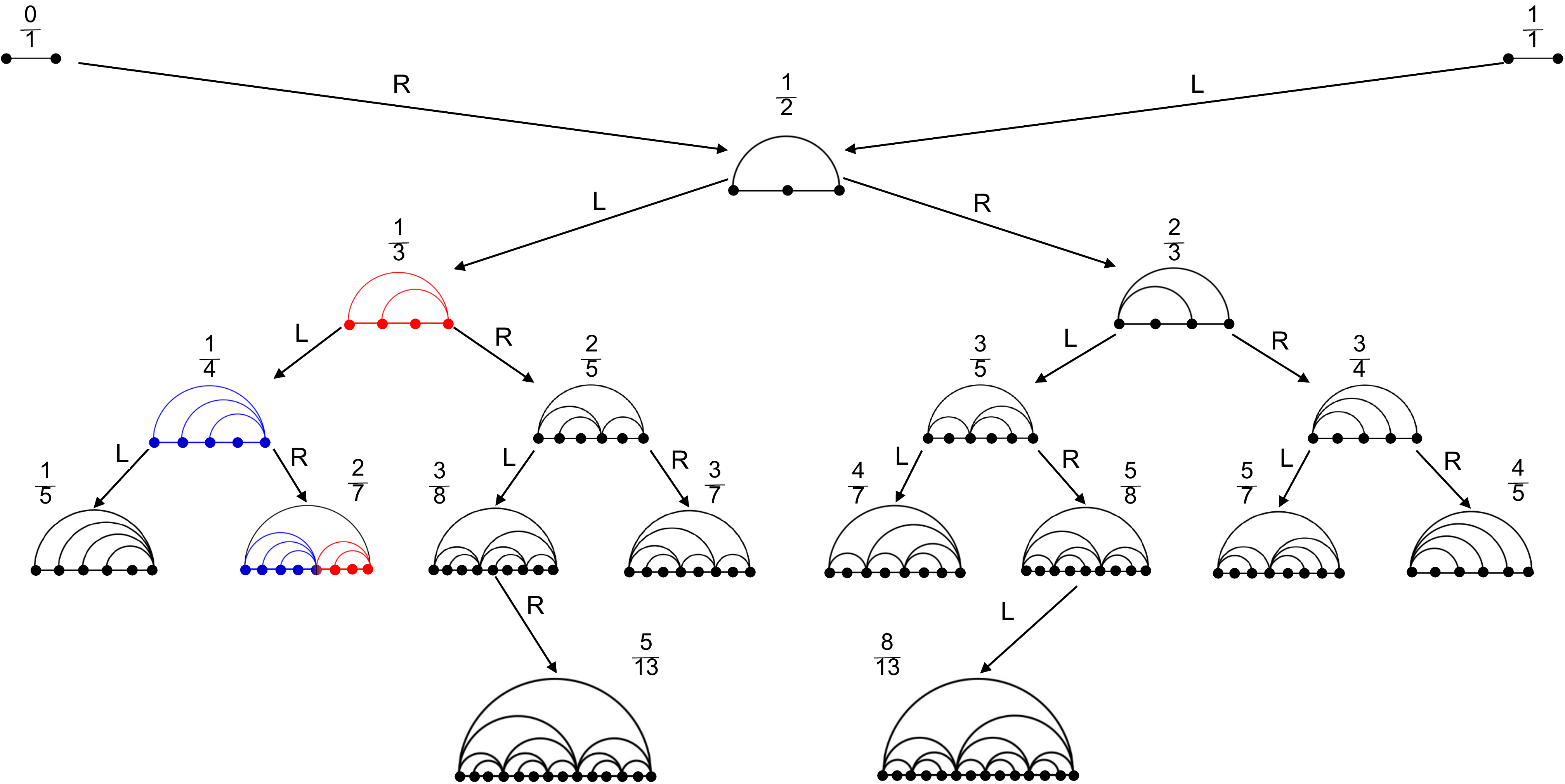}
\caption{Six levels of the Haros graph tree with Haros graphs $G_{p/q}$ associated with the corresponding rational fractions $p/q$ (only two of these are shown at the sixth level due to space constraints). The first level is formed by two copies of the initial graph $G_0$. The graph operator merges the two nearby extreme nodes, adding a connection to the resulting graph that connects the new extreme nodes. On the left, the Haros graph $G_{2/7}$ is generated as a concatenation of $G_{1/4}$ (blue color) and $G_{1/3}$ (red color), i.e., $G_{2/7} = G_{1/4} \oplus G_{1/3}$.}
\label{Fig_1}
\end{figure*}

Consequently, an univocal correspondence $\tau$ exists between real numbers $x \in [0, 1]$ and Haros graphs $\tau(x) = G_x \in \mathcal{G}$. The degree distribution $P(k,x)$ of Haros graphs $G_x$, that is, the probability that a randomly selected node in $G_x$ has degree $k$, is one of the main features investigated in \cite{HarosPaper}.The degree distribution was deemed a fruitfully tool because Haros graphs are uniquely determined by the degree sequence \cite{canonical}, whereas the degree distribution is a marginal distribution of the degree sequence. Indeed, the degree distribution for the three initial values of $k$ confirms:

\begin{equation}
\label{P234}
P(k,x < 1/2)=\left\{
\begin{array}{ll}
x,  & k=2 \\
1 - 2x, & k=3 \\
0, & k=4;
\end{array}%
\right.
P(k,x > 1/2)=\left\{
\begin{array}{ll}
1 - x,  & k=2 \\
2x - 1, & k=3 \\
0, & k=4.
\end{array}%
\right.
\end{equation} \\

In contrast to the initial values, which are related with the real number $x$ associated with $G_x$, the closed form of the degree distribution $P(k,x)$ has only been drawn for degrees $k \geq 5$.  Taking into account the above fact, this paper outlines two theorems to complete the degree distribution expression, based on two distinct approaches. Initially, a complete description of $P(k,x)$ is only provided in terms of the continued fraction of $x$, or, equivalently, in terms of the Haros graph creation process codified along the symbolic binary path. The second result exhibits the properties of $P(k,x)$ as a real value continuous and  piece-wise linear function. \\

This paper is separated into $5$ sections. Section \ref{sec_Preliminaries} gives a brief overview of Haros graphs and its connections to Farey sequences, continued fractions and the Farey binary Tree. The first theorem, provided in section \ref{sec_Tma1}, states the closed form of $P(k,p/q)$ with respect to truncations of the continued fraction of $p/q$. Section \ref{sec_Tma2} rewrites the preceding result using the position of $p/q$ in the Farey binary tree. In addition, Appendix \ref{sec_Appendix}  includes detailed proofs for the assertions in Section \ref{sec_Tma1} and Section \ref{sec_Tma2}.

\section{Preliminaries}
\label{sec_Preliminaries}

The Farey binary tree is a canonical way of represent the set of rational numbers in $[0,1]$ as a binary tree starting with the fractions $0/1, 1/1$ --the elements of Farey sequence $\mathcal{F}_1$-- and creating new irreducible fractions by the mediant sum of two consecutive fractions in $\mathcal{F}_n$:

\begin{equation}
 \frac{p}{q} \oplus \frac{r}{s} = \frac{p + r}{q + s}.
\label{mediantsum}
\end{equation}

The binary tree representation allocates each rational to a level $k$ of the tree, denoted $\ell_k$. For instance, the three first levels consist of:

$$\ell_1=\left\lbrace\frac{0}{1},  \frac{1}{1}\right\rbrace; \ \ell_{2}=\left\lbrace\frac{1}{2}\right\rbrace; \ \ell_{3}=\left\lbrace \frac{1}{3},  \frac{2}{3}\right\rbrace.$$

This representation is closely related with the continued fraction, a powerful technique for representing a real number in the interval $[0,1]$ as
$$x = \cfrac{1}{a_1 + \cfrac{1}{a_2 + \cfrac{1}{a_3 + ...}}} = [a_1, a_2,a_3,...].$$

The relationship has been presented in \cite{Bonnano_Isola}, and it has been establishes that a number with a continued fraction expression $[a_1, a_2,a_3,...]$ has associated a symbolic binary path in the Farey binary tree $L^{a_1} R^{a_2} L^{a_3} ... $, where $L^q$ is interpreted as a sequence of $q$ symbols $L$ (if the symbolic path is finite, the last symbol has an index $a_n -1$). Therefore, since every irrational number has an infinite continued fraction, the irrational numbers are reachable through an infinite path in the Farey binary Tree. Moreover, the continued fraction allows a sequence of rational so-called convergents \cite{Khinchin} defined as:
\begin{eqnarray}
\left\{
\begin{array}{ll}
p_k =  & a_k \cdot p_{k-1} + p_{k-2} \\
q_k =  & a_k \cdot q_{k-1} + q_{k-2}, \\
\end{array}%
\right.
\label{Convergentes_recurrencia}
\end{eqnarray}

with initial values $p_{-2} = 0, q_{-2} = 1, p_{-1} = 1, q_{-1} = 0$, where   $$ \lim_{k \to \infty} \frac{p_k}{q_k} = \lim_{k \to \infty} [a_1, ... , a_k] = [a_1, a_2, \dots] = x.$$

As stated in the preceding section, the Haros graphs set $\mathcal{G}$ provides a graph-based representation of the unit interval $[0,1]$. The primary objective is to reproduce, in a graph scenario, the mediant sum --described in Eq. \ref{mediantsum}-- that was utilised to construct $\mathcal{F}_n$  \cite{Hardy}. The concatenation graph is depicted in Fig. \ref{Fig_1}, and Flanagan et al. \cite{flanagan2019spectral} provide such a  comprehensive definition. Every Haros graph $G$ (except for the initial graph) is therefore described as $G = G_L \oplus G_R$, where $G_L, G_R$ are also Haros graphs. However, just as mediant sum only takes to two nearby fractions in Farey sequences, Haros graphs only can be concatenated if $G_L, G_R$ are also adjacent. \\

In order to analyse the topological structure of Haros graph, the probability distribution degree $P(k,x)$ proves to be a useful instrument. Eq. \ref{P234} identifies the three first values of $k$, although by construction $P(k,0) = P(k,1) = 0$. In addition, for degrees $k \geq 5$, Theorem 2 in \cite{HarosPaper} determines that the degree values for which $P(k,x) = 0$ rely on the symbol repetition -- $RR$ or $LL$ -- in the symbolic path of the Haros graph Tree to reach $G_x$. Also, the same research conjectures regarding the closed form of $P(k,x)$. The aim is to provide a formal proof of this claim. \\

Prior to this, a brief explanation of the emergence of degrees $k \geq 5$ is provided: emerging of degrees $k \geq 5$ occurs if there is a change of symbol $L \to R$ or $R \to L$ in the path of the Haros graph Tree. Consequently, the degree emerged, or not, is related to the level at which this  symbolic change occurs. Suppose that we have covered the path $L^{a_1}R^{a_2} ... L^{a_{k-1}}$. Next, the downstream of $R^{a_k}$ generates a new degree, which had previously appeared as a boundary node before to the shift in direction (the node resulting of the identification of extreme nodes). Specifically, in the first downstream $R$, the degree appears in the merging node, and the number of nodes with this degree increases by one with each descent. Therefore, when we reach $R^{a_k - 1}$, there will be $0 + (a_k - 1) = a_k - 1$ nodes of that degree. In addition, the Haros graph achieved is associated with $p_k /q_k = [a_1, ..., a_k]$. \\

Now, in order to reach the Haros graph associated with $[a_1, ..., a_k, a_{k+1}]$, we must descend to $R$ and then $a_{k+1} - 1$ times to $L$. If there were $a_k - 1$ nodes of that degree previously, a new descent will result in $a_k$ nodes. Then, performing a descent $L^{a_{k+1} - 1}$ requires concatenating the Haros graph reached by $R^{a_k}$ with $a_{k+1} - 1$ copies of the Haros graph reached by $R^{a_k - 1}$, so that the resultant Haros graph has $(a_k - 1)\cdot (a_{k+1} - 1) + a_k = (a_k - 1)\cdot (a_{k+1}) + 1$ nodes of that degree. \\

In other words, the emerging of the degree according to the recursive equation $q_n = q_{n-2} + a_n \cdot q_{n-1}$, where the terms $a_{i}$ correspond to the continued fraction $[a_k - 1, a_{k+1}, ..., a_m]$, with initial conditions $q_{-1} = 0, q_0 = 1$. Hence, this recursive equation converges to the denominator of the continued fraction $[a_k - 1, a_{k+1}, ..., a_m]$. \\

\section{Degree distribution of rational Haros graphs \texorpdfstring{$G_{p/q}$}{a} based on continued fraction \texorpdfstring{$p/q = [a_1, ... , a_m]$}{b}}
\label{sec_Tma1}

The first presented result provides an explicit description of the degree distribution related with truncations of the continued fraction of $p/q$, the rational associated with Haros graph $G_{p/q}$: 
\begin{Theorem}
Let $p/q \in [0,1/2]$, with $p/q = [a_1, ... , a_m]$. Then, the degree distribution $P(k,p/q)$ of the Haros graph $G_{p/q}$ is:
\begin{equation}
P\left(k,\frac{p}{q}\right)=\left\{
\begin{array}{ll}
p/q,  & k=2 \\
(q-2p)/q, & k=3 \\
0, & k=4 \\
s^{(l)}/q, & \textrm{for values } k = \sum_{i = 1}^{l} a_i + 3, \textrm{with } \forall l = 1, ..., m - 1, \\
& \textrm{where }r^{(l)}/s^{(l)} = [a_{l+1} -1 , ... , a_m] \\
1/q, & \textrm{if } k = \sum_{i = 1}^{m} a_i + 2\\
0, & \textrm{otherwhise.}
\end{array}%
\right.
\label{GH_Pkxeq}
\end{equation}
\label{GH_tma1}
\end{Theorem}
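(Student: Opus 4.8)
The plan is to prove the theorem by induction on the structure of the Haros graph, tracking how each degree class evolves as we descend the Farey binary tree along the symbolic path $L^{a_1} R^{a_2} L^{a_3} \cdots$ associated with the continued fraction $p/q = [a_1,\ldots,a_m]$. The three entries for $k=2,3,4$ are already established by Eq.~\eqref{P234} (restricted to $x \le 1/2$) and by the construction $P(4,x)=0$, so the real content lies in the middle case, $s^{(l)}/q$ for $k = \sum_{i=1}^l a_i + 3$, and the terminal case $1/q$ for $k = \sum_{i=1}^m a_i + 2$. First I would fix notation: write $Q = q$ for the total number of nodes in $G_{p/q}$ (the denominator), so that every degree-count $N(k)$ translates to a probability $N(k)/Q$. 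The goal is therefore to show that the \emph{integer} count of degree-$k$ nodes equals $s^{(l)}$, the denominator of the truncated continued fraction $[a_{l+1}-1,\ldots,a_m]$, precisely at the levels $k = \sum_{i=1}^l a_i + 3$, and equals $1$ at the maximal degree $k = \sum_{i=1}^m a_i + 2$.

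The heart of the argument is the node-counting recursion already sketched in the Preliminaries. When the symbolic path undergoes a direction change and then continues through a block $R^{a_k} L^{a_{k+1}-1}\cdots$, the number of nodes of the newly-emerged degree evolves exactly according to the convergent recursion $q_n = q_{n-2} + a_n q_{n-1}$ with the shifted initial index $a_k - 1$. I would make this precise as a lemma: the count of nodes carrying the degree that first appears after the $l$-th direction change satisfies this recursion with input coefficients $[a_{l+1}-1, a_{l+2}, \ldots, a_m]$ and seeds $q_{-1}=0$, $q_0 = 1$, and hence converges to the denominator $s^{(l)}$ of $[a_{l+1}-1,\ldots,a_m]$. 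The key identity to verify at each descent is that concatenating $G_L \oplus G_R$ adds the degree-counts of the two constituent graphs while creating exactly one new node at the merge (the identified extreme nodes), whose degree is governed by the current direction; this is the combinatorial content that makes the Farey mediant structure on denominators transfer verbatim to the degree counts.

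The steps, in order, would be: (i) record the base case, namely the emergence described in the Preliminaries, where after $R^{a_k-1}$ one has $a_k - 1$ nodes of the new degree and the associated fraction is $p_k/q_k = [a_1,\ldots,a_k]$; (ii) prove the concatenation lemma stating that under $\oplus$ the degree-$k$ counts add, with one controlled new node at the junction; (iii) iterate the lemma along the block $R^{a_k}L^{a_{k+1}-1}$ to obtain the count $(a_k-1)a_{k+1}+1$ after that block, matching one step of the recursion $q_n = q_{n-2}+a_n q_{n-1}$; (iv) run the induction through all subsequent blocks to conclude the count equals $s^{(l)}$, the denominator of $[a_{l+1}-1,\ldots,a_m]$; and (v) handle the maximal degree separately, showing that the deepest degree (reached only at the very last symbol) corresponds to a single node, giving the $1/q$ entry. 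Normalising every count by $q$ then yields the stated $P(k,p/q)$.

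The main obstacle I anticipate is step (iii)–(iv): cleanly proving that the degree-count recursion is \emph{identical} to the convergent recursion Eq.~\eqref{Convergentes_recurrencia} but with the shifted first coefficient $a_{l+1}-1$. The subtlety is bookkeeping the ``$\pm 1$'' at each concatenation, reconciling the two equivalent forms $(a_k-1)(a_{k+1}-1)+a_k = (a_k-1)a_{k+1}+1$ that already appear in the Preliminaries, and ensuring the initial conditions $q_{-1}=0,\ q_0=1$ are correctly aligned with ``one new node emerges at the first $R$ after a direction change.'' A secondary difficulty is confirming that the levels $k = \sum_{i=1}^l a_i + 3$ are exactly where each distinct degree class lives and that no two classes collide at the same $k$; this follows because a direction change strictly increases the accumulated exponent sum, but it must be stated carefully so that the ``otherwise $0$'' case is genuinely exhaustive. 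Once the recursion is pinned down, the convergence to $s^{(l)}$ is immediate from the definition of continued-fraction denominators, and the theorem follows.
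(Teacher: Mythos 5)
Your plan is workable, and its two pillars --- additivity of interior degree counts under $\oplus$, and the convergent recursion applied to the shifted fraction $[a_{l+1}-1,\ldots,a_m]$ --- are exactly those of the paper; but you organize the induction genuinely differently, and the comparison is instructive. You fix one $p/q$ and induct along its symbolic path, block by block, formalizing the emergence sketch from the Preliminaries. The paper instead runs a strong induction on the levels of the Haros graph tree: for an arbitrary $G_{p/q}$ with $p/q=[a_1,\ldots,a_m]$, its same-direction child $[a_1,\ldots,a_m+1]$ is the concatenation $G_{p_{m-1}/q_{m-1}}\oplus G_{p/q}$, and its direction-change child $[a_1,\ldots,a_m-1,2]$ is $G_{p/q}\oplus G_{a/b}$ with $a/b=[a_1,\ldots,a_m-1]$ (the continued-fraction expressions for descendants come from \cite{Cvitanovic}); since both constituents of each concatenation fall under the induction hypothesis, every class-$l$ count in the child is a sum of two known denominators, and a single application of Eq.~\eqref{Convergentes_recurrencia} converts that sum into the denominator of $[a_{l+1}-1,\ldots,a_m+1]$ or of $[a_{l+1}-1,\ldots,a_m-1,2]$. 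That organization buys exactly what you flag as your main obstacle: the $\pm1$ bookkeeping collapses into two clearly delimited special cases, namely the newest class $l=m-1$ (whose count in ancestors is shifted by one because that degree sits at the boundary node) and the merging node carrying the top degree $\sum_i a_i+2$ (counted once). Your path-wise route is more constructive and mirrors how each degree class grows, but to execute your steps (iii)--(iv) you must identify, at every single descent, which earlier graph on the path is being glued (the other parent, which stays fixed along a same-direction run) and know its class-$l$ count; that forces you to carry the hypothesis for all classes and all prefixes of the path simultaneously, at which point your induction becomes the paper's, re-indexed by path position rather than tree level. Finally, you need not reprove that nonzero entries occur only at $k=\sum_{i=1}^{l}a_i+3$ nor that the boundary degree has multiplicity $1$: the paper simply cites Theorem 2 and Appendix B of \cite{HarosPaper} for these facts, which keeps the exhaustiveness of the ``otherwise $0$'' clause out of the inductive core.
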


\begin{proof}
See Section \ref{subsection_proof_tma1} for a complete proof.
\end{proof}

The theorem has several consequences: first, it unveils the whole expression for $P(k,x)$ providing a wide amount of topological information. Moreover, as stated in the previous section, the values $k \geq 5$ are related to the continued fraction and, consequently, with the symbolic path reached in the Haros graph Tree. In addition,  the result reduces the computational cost  for obtaining the degree distribution $P(k,p/q)$ of the Haros graph $G_{p/q}$. Initially, observe that the denominator of the $n$-th convergent $q_n$ of $p/q$ verifies $q_n \geq \phi^{n-1}$, where $\phi$ is the Golden number \cite{Tsigaridas}. Hence, as the Haros graph $G_{p_n /q_n}$ has $q_n + 1$ nodes, its growth is exponential, but the theorem \ref{GH_tma1} uses only continued fractions \cite{Vuillemin, Welch}. \\

Let us illustrates an example: consider the Haros graph $G_{10/23}$, where $10/23 = [a_1, a_2, a_3] =  [2,3,3]$. Hence, the symbolic path in the Haros graph tree is $L^2R^3L^2$ (see Fig. \ref{Fig_2} for an illustration). Numerically, its degree distribution is as follows:
 
 \begin{equation}
P(k,10/23)=\left\{
\begin{array}{ll}
\frac{10}{23},  & k=2 \\
\frac{3}{23}, & k=3 \\
0, & k=4 \\
\frac{7}{23}, & k=5 \\
\frac{2}{23}, & k=8 \\
\frac{1}{23}, & k=10 \\
0, & \text{ otherwise.}
\end{array}%
\right.
\label{GH_P1023_eq}
\end{equation}
 
\begin{center}
\begin{figure*}[h!]
\includegraphics[width=0.9\textwidth, trim=0cm 3cm 0cm 3cm,clip=true]{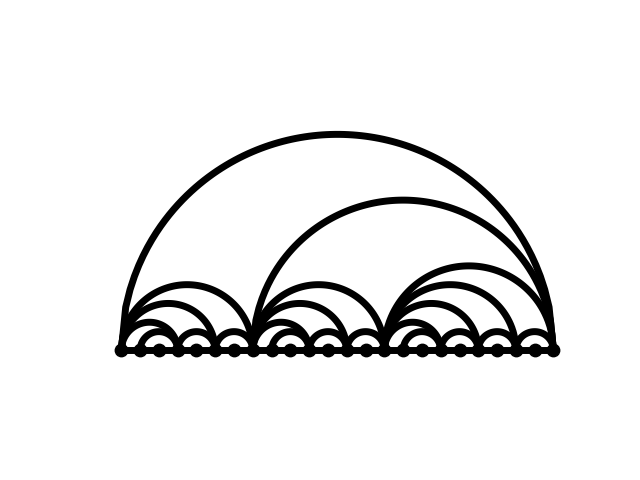}
\caption{Haros graph $G_{10/23}$. As $10/23 = [2,3,3]$, it is clear that the binary symbolic path to reach this Haros graph is $LLRRRLL = L^2 R^3 L^{3-1}$. According to the boundary node convention, the extreme nodes are identified as a single boundary node, while the total of degrees is maintained. Then, the degree sequence is $[3,2,5,2,5,2,8,3,2,5,2,5,2,8,3,2,5,2,5,2,5,2, 5+5 =10]$.}
\label{Fig_2}
\end{figure*}
\end{center}

Then, we can verify that there are $m_{5, 10/23} = 7$ nodes of degree $k = 5$, and that this number corresponds with the denominator of the continued fraction
$$[a_2 - 1, a_3 ] = [2,3] = \frac{3}{7};$$
Also, there are $m_{8, 10/23} = 2$ nodes of degree $k = 8$, which is the denominator of the continued fraction.
$$[a_3 - 1] = [2] = \frac{1}{2}.$$
Lastly, $k = 10$ corresponds with the degree of the boundary node, therefore it only appears once.

\section{Degree distribution of rational Haros graphs \texorpdfstring{$G_{p/q}$}{a} related to the rational number \texorpdfstring{$p/q$}{a}}
\label{sec_Tma2}

With the theorem \ref{GH_tma1}, we are able to provide a proof for the conjecture presented in \cite{HarosPaper}. Contrary to the previous finding, the subsequent theorem outcomes in a formulation of the degree distribution relating with the number $x$, but the computation of $P(k,x)$ for every $k$ depends on the location of $x$ in the sub-intervals given by the levels in the Farey binary tree $\ell_k$.  \\

\begin{Theorem}
Let $p/q \in [0,1/2]$ and the associated Haros graph $G_{p/q}$. Let consider  $\ell_{k-3} = \left\{ \frac{a_i}{b_i} \right\}_{i = 1}^{2^{k-5}}$, and $\ell_{k-2} = \left\{ \frac{p_j}{q_j} \right\}_{j = 1}^{2^{k-4}}$, clearly we have that $\forall i = 1, ..., 2^{k-5}$:

\begin{equation}
 \frac{p_{2i-1}}{q_{2i-1}} < \frac{a_i}{b_i} <\frac{p_{2i}}{q_{2i}}
\end{equation}

Therefore, the degree distribution of Haros graph $G_{p/q}$ for degrees $k \geq 5$ is: 
\begin{equation}
P\left( k,\frac{p}{q} \right)=\left\{
\begin{array}{ll}
q_{2i-1} \cdot (p/q) - p_{2i-1}, \  \textnormal{if }\  p/q \in\left(\frac{p_{2i-1}}{q_{2i-1}} ,\frac{a_i}{b_i}\right), \forall i = 1, ... , 2^{k-5},  \\ 
-q_{2i} \cdot (p/q) + p_{2i}, \ \textnormal{if  }\ p/q \in \left( \frac{a_i}{b_i} ,\frac{p_{2i}}{q_{2i}} \right), \forall i = 1, ... , 2^{k-5},\\  
1/q_j,\ \textnormal{si }\ \frac{p}{q} =\frac{p_j}{q_j} \in \ell_{k-2},\\ 
 0,\ \textnormal{otherwise}.
\end{array}%
\right.
\label{GH_Pkxgen}
\end{equation}
\label{GH_tma2}
\end{Theorem}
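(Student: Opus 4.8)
The plan is to prove Theorem \ref{GH_tma2} by deriving it from Theorem \ref{GH_tma1}, translating the continued-fraction description of $P(k,p/q)$ into the geometry of the Farey binary tree. The bridge is the dictionary of Section \ref{sec_Preliminaries}: writing $p/q=[a_1,\dots,a_m]$, the truncation $[a_1,\dots,a_l]$ is the convergent $p_l/q_l$ and sits at level $\ell_{\sigma_l}$, where $\sigma_l=\sum_{j=1}^{l}a_j$; in particular $p/q$ occupies level $\ell_{\sigma_m}$. I would first record this level-counting identity together with the standard determinant relation $p_lq_{l-1}-p_{l-1}q_l=(-1)^{l-1}$ and the complete-quotient representation of $p/q$.

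First I would dispose of the two easy rows. If $p/q=p_j/q_j\in\ell_{k-2}$ then $\sigma_m=k-2$, so the last line of Theorem \ref{GH_tma1} furnishes the degree $k=\sigma_m+2$ with value $1/q=1/q_j$, matching the third case. For the remaining degrees $k\ge 5$, Theorem \ref{GH_tma1} gives a nonzero value only when $k=\sigma_l+3$ for some $1\le l\le m-1$, that is, precisely when the level-$(k-3)$ ancestor of $p/q$ is a convergent $p_l/q_l$; I identify this convergent with the unique $a_i/b_i\in\ell_{k-3}$ on the path to $p/q$. Using that consecutive convergents straddle $p/q$ (odd convergents exceed $p/q$, even ones fall below it), I would argue that $p/q$ lies in the left child-interval $(p_{2i-1}/q_{2i-1},a_i/b_i)$ when $l$ is odd and in the right one $(a_i/b_i,p_{2i}/q_{2i})$ when $l$ is even. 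When no such $l$ exists the level-$(k-3)$ ancestor is an interior (non block-boundary) node, no symbol change occurs there, Theorem \ref{GH_tma1} returns $0$, and geometrically $p/q$ avoids all the listed intervals, producing the ``otherwise'' row.

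The heart of the argument is to evaluate the multiplicity $q\,P(k,p/q)=s^{(l)}$, the denominator of $[a_{l+1}-1,a_{l+2},\dots,a_m]$, in tree coordinates. Writing the complete quotient $[a_{l+1};a_{l+2},\dots,a_m]=A/B$ in lowest terms, the representation $p=Ap_l+Bp_{l-1}$, $q=Aq_l+Bq_{l-1}$ together with $[a_{l+1}-1,\dots,a_m]=B/(A-B)$ gives $s^{(l)}=A-B=(-1)^{l-1}\bigl(p(q_{l-1}+q_l)-q(p_{l-1}+p_l)\bigr)$. The key geometric identity I must then establish is that the mediant of consecutive convergents, $\tfrac{p_{l-1}+p_l}{q_{l-1}+q_l}$, is exactly the child of $a_i/b_i=p_l/q_l$ on the side of $p/q$: the left child $p_{2i-1}/q_{2i-1}$ for $l$ odd and the right child $p_{2i}/q_{2i}$ for $l$ even. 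This rests on two facts about the tree, namely that $p_{l-1}/q_{l-1}$ is one of the two fractions bounding the subtree rooted at $p_l/q_l$, and that $p/q$ lies between $p_{l-1}/q_{l-1}$ and $p_l/q_l$, so the near child is their mediant. Substituting this identity turns $s^{(l)}$ into $q_{2i-1}p-p_{2i-1}q$ (for $l$ odd) and $p_{2i}q-q_{2i}p$ (for $l$ even), which upon dividing by $q$ are precisely the two linear expressions in \eqref{GH_Pkxgen}.

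Finally I would verify the piecewise-linear and continuity claim. Because $p_{2i-1}/q_{2i-1}$ and $a_i/b_i$ are Farey neighbours, $a_iq_{2i-1}-b_ip_{2i-1}=1$, so the left piece $q_{2i-1}x-p_{2i-1}$ vanishes at $x=p_{2i-1}/q_{2i-1}$ and equals $1/b_i$ at $x=a_i/b_i$; the right piece behaves symmetrically, so the two slopes meet at the common peak $1/b_i$ over each $a_i/b_i\in\ell_{k-3}$, giving a continuous tent-shaped profile with breakpoints at Farey fractions. The main obstacle I anticipate is the crux geometric identity of the third paragraph: pinning down, with the correct parity bookkeeping and orientation, that the mediant of successive convergents coincides with the appropriate child in $\ell_{k-2}$, since this is exactly where the continued-fraction recurrence and the binary-tree adjacency must be matched.
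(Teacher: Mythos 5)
Your proposal is correct, and it reaches Theorem \ref{GH_tma2} by a genuinely different route than the paper, although both share the same skeleton: reduce the statement to Theorem \ref{GH_tma1} by showing that the numerators $q_{2i-1}\,p - q\,p_{2i-1}$ (resp.\ $q\,p_{2i} - p\,q_{2i}$) equal the multiplicity $s^{(l)}$, using the Cvitanovic description $[\alpha_1,\dots,\alpha_r+1]$, $[\alpha_1,\dots,\alpha_r-1,2]$ of the two children in the tree. The paper performs this verification with Euler continuants: it fixes one of the two interval types, writes the continued fraction of $p/q$ with the explicit prefix $[\alpha_1,\dots,\alpha_r,b_{r+1},\dots,b_m]$ (or $[\alpha_1,\dots,\alpha_r-1,1,b_{r+2},\dots,b_m]$ on the other side of $a_i/b_i$), expands via the splitting identity Eq.~\ref{GH_contgen}, and collapses the brackets with the Muir--Metzler identity Eq.~\ref{GH_contdet} to land on $K(b_{r+1}-1,\dots,b_m)$, repeating the computation separately for the second interval type. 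You instead parametrise by the index $l$ of the convergent $p_l/q_l = a_i/b_i$ and its parity, and your pivot is the tree-geometric lemma that the mediant $(p_{l-1}+p_l)/(q_{l-1}+q_l) = [a_1,\dots,a_l+1]$ is exactly the child of $a_i/b_i$ on the side of $p/q$; granting this, the complete-quotient representation $p = Ap_l + Bp_{l-1}$, $q = Aq_l + Bq_{l-1}$ and the determinant relation $p_lq_{l-1}-p_{l-1}q_l = (-1)^{l-1}$ give $s^{(l)} = A-B = (-1)^{l-1}(p\,q_c - q\,p_c)$ in two lines. I have checked this computation and your parity bookkeeping (mediant $=$ left child for $l$ odd, right child for $l$ even), and both are right; in fact yours is the internally consistent assignment, whereas the paper's preliminary equations Eq.~\ref{GH_desc_impar}--Eq.~\ref{GH_desc_par} state the labels $p_{2i-1}/q_{2i-1}$, $p_{2i}/q_{2i}$ swapped relative to the ordering asserted in the theorem, and its subsequent computation silently reverts to the correct one. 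What each route buys: yours dispenses with continuants entirely, treats both interval types and the ``otherwise'' row uniformly (exhaustiveness follows because any $p/q$ strictly between $a_i/b_i$ and one of its children must have $a_i/b_i$ as a convergent --- the standard cylinder-set description of continued fractions, which the paper also invokes implicitly when it asserts the prefix of $p/q$), and it explains conceptually why the child's numerator and denominator appear in the linear formula; the paper's version is longer but purely algebraic and never needs the mediant/convergent geometry. Your remaining obligations in a full write-up are exactly the ones you flagged: prove the mediant-child lemma (the convergent recurrence identifies the numerator and denominator of $[a_1,\dots,a_l+1]$ at once, and the alternation of convergents around $p/q$ fixes the side), and observe that the complete quotient satisfies $A/B > 1$ under the canonical convention $a_m \geq 2$, so that $p/q$ indeed lies strictly inside the stated open interval.
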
 

\begin{proof}
See Section \ref{subsection_proof_tma2} for a complete proof.
\end{proof}

The theorem can be extended from rational numbers to all real numbers. Fig. \ref{Fig_3} depicts a numerical computation of $P(k,x)$ for the first values of $k \geq 5$ verifying the statement of Theorem \ref{GH_tma2}. Moreover, this new formulation allows following statement to emphasise some aspects of the degree distribution $P(k,x)$ as a real function over the variable $x$:

\begin{Corollary}
The degree distribution $P(k,x)$ over the variable $x$ is a piece-wise linear and continuous function, with the exception of  measure null set.
\end{Corollary}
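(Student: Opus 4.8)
The plan is to obtain both properties by reading directly off the explicit formula in Theorem~\ref{GH_tma2}, reducing the only nontrivial point to two classical identities of the Farey binary tree. First I would dispose of the low degrees: for $k\in\{2,3,4\}$, Eq.~\ref{P234} exhibits $P(k,\cdot)$ as affine on $[0,1/2)$ and on $(1/2,1]$ with matching one-sided values at $x=1/2$, so it is piecewise linear and continuous there. For $k\ge 5$ I would fix $k$ and note that in Eq.~\ref{GH_Pkxgen} the index $i$ ranges over the finite set $\{1,\dots,2^{k-5}\}$; hence the formula partitions $[0,1/2]$ into finitely many intervals: the rising pieces $(p_{2i-1}/q_{2i-1},a_i/b_i)$, the falling pieces $(a_i/b_i,p_{2i}/q_{2i})$, and the complementary intervals on which $P(k,\cdot)\equiv 0$. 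On each of these $P(k,\cdot)$ is affine (a constant being a degenerate affine map), which already yields piecewise linearity; the reflection symmetry $P(k,x)=P(k,1-x)$ of Haros graphs then transports everything to $[0,1]$.

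The core step is continuity at the interior junctions, namely the apexes $a_i/b_i$, where a rising piece meets the adjoining falling piece. Two Stern--Brocot facts make this a one-line check. Since $a_i/b_i\in\ell_{k-3}$ is the mediant of its two children $p_{2i-1}/q_{2i-1},p_{2i}/q_{2i}\in\ell_{k-2}$, one has $a_i=p_{2i-1}+p_{2i}$ and $b_i=q_{2i-1}+q_{2i}$; and, parent and child being Farey neighbours, the unimodular relations $a_iq_{2i-1}-p_{2i-1}b_i=1$ and $p_{2i}b_i-a_iq_{2i}=1$ hold. Substituting $x=a_i/b_i$ into the two branches of Eq.~\ref{GH_Pkxgen} gives $q_{2i-1}(a_i/b_i)-p_{2i-1}=(a_iq_{2i-1}-p_{2i-1}b_i)/b_i=1/b_i$ and $-q_{2i}(a_i/b_i)+p_{2i}=(p_{2i}b_i-a_iq_{2i})/b_i=1/b_i$, so both pieces take the common value $1/b_i$ there. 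Thus $P(k,\cdot)$ is continuous at every apex.

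It remains to locate the discontinuities and bound their size. At any base point $p_j/q_j\in\ell_{k-2}$ the adjoining affine pieces have one-sided limits $q_j(p_j/q_j)-p_j=0$ and $-q_j(p_j/q_j)+p_j=0$ (and $P(k,\cdot)\equiv 0$ on any adjacent gap), whereas the value prescribed by Eq.~\ref{GH_Pkxgen} is $1/q_j\neq 0$; hence $P(k,\cdot)$ jumps exactly at the points of $\ell_{k-2}$ and is continuous elsewhere. For fixed $k$ this exceptional set is the finite set $\ell_{k-2}$, of cardinality $2^{k-4}$; if one instead regards all degrees simultaneously, the union $\bigcup_{k}\ell_{k-2}=\mathbb{Q}\cap[0,1/2]$ is countable. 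In either reading it has Lebesgue measure zero, which is the asserted null set, and the reflection symmetry again extends the conclusion to $[0,1]$.

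The only genuine obstacle is the continuity computation at the apexes; everything else is bookkeeping once Theorem~\ref{GH_tma2} is available. I would take care over two matching issues on which that computation silently depends --- that the right endpoint of each rising piece coincides with the left endpoint of the corresponding falling piece (the same apex $a_i/b_i$), and that each base $p_j/q_j$ is an isolated spike whose neighbouring pieces both vanish at it --- both of which follow from the ordering displayed in Theorem~\ref{GH_tma2} together with the mediant structure of the tree.
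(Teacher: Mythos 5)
Your overall strategy---reading piecewise linearity directly off Theorem~\ref{GH_tma2} and reducing the only nontrivial point to a unimodularity computation at the tent apexes---is the natural one, and your computation $q_{2i-1}\,(a_i/b_i)-p_{2i-1} = 1/b_i = -q_{2i}\,(a_i/b_i)+p_{2i}$ is correct, since parent and child in the Farey tree are indeed Farey neighbours. (Your auxiliary claim $a_i=p_{2i-1}+p_{2i}$, $b_i=q_{2i-1}+q_{2i}$ is false componentwise---the mediant of the two children equals $3a_i/3b_i$, e.g.\ the children of $1/2$ are $1/3$ and $2/3$---but you never actually use it, so this is only a slip.) The genuine error is the conclusion you draw from the computation: ``$P(k,\cdot)$ is continuous at every apex'' and hence ``jumps exactly at the points of $\ell_{k-2}$.'' Substituting $x=a_i/b_i$ into the two branches of Eq.~\ref{GH_Pkxgen} computes the two one-sided \emph{limits} at the apex, not the value of the function there: the apex $a_i/b_i$ belongs to $\ell_{k-3}$, not to $\ell_{k-2}$ and not to either open interval, so the formula assigns it the value $0$ through the ``otherwise'' clause. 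Concretely, for $k=5$ the apex is $1/2$, and $G_{1/2}$ has no node of degree $5$, so $P(5,1/2)=0$ while both one-sided limits equal $1/2$. The apexes are therefore removable discontinuities, just like the spikes at $\ell_{k-2}$; this is precisely what the red points in Fig.~\ref{Fig_3} depict, and its caption states it explicitly ($G_{1/2}$ ``without nodes of degree $k=5$'', together with $G_{1/3},G_{2/3}$ where the degree sits at the boundary node).

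The damage to the corollary itself is minimal: the discontinuity set for fixed $k\geq 5$ is $\ell_{k-3}\cup\ell_{k-2}$ (finite), and over all $k$ it is a subset of $\mathbb{Q}\cap[0,1]$ (countable), so it is still a null set and the statement survives once the bookkeeping is corrected. But as written your proof asserts continuity at points where the function is discontinuous, i.e.\ it misidentifies the very null set the corollary is excepting; your unimodularity computation should instead be quoted as showing that the discontinuities at $\ell_{k-3}$ are \emph{removable} (each tent's two linear pieces extend continuously across the apex), not that they are absent. The same phenomenon already occurs in your ``low degree'' step: Eq.~\ref{P234} gives $0$ on both sides of $x=1/2$ for $k=4$, yet the boundary-node clause of Theorem~\ref{GH_tma1} gives $P(4,1/2)=1/2$, a spike at $\ell_{k-2}=\ell_{2}=\{1/2\}$, so the cases $k\leq 4$ are not quite continuous everywhere either.
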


Now, let us illustrate the theorem by applying the result to the case when $k = 5$. Then, it is a simple matter to confirm that if $p/q \in (1/3, 1/2)$, then:

$$P\left(k= 5, \frac{p}{q} \right) = 3 \cdot \frac{p}{q} - 1 = \frac{3p - q}{q}.$$

In comparison, the continued fractions of rational numbers  $p/q \in (1/3, 1/2)$ start with the term $a_1 = 2$. In virtue of theorem \ref{GH_tma1}, the degree $k = a_1 + 3 = 5$ would have a frequency of $s^{(1)}/q$, where $s^{(1)}$ is the denominator of $[a_2 - 1, a_3, ..., a_m]$. Let us examine how the two expressions coincide:

\begin{eqnarray*}
\frac{p}{q} = \cfrac{1}{2 + \cfrac{1}{a_2 + \ddots }} \Rightarrow \frac{q}{p} - 2 = \frac{q - 2p}{p} = \cfrac{1}{a_2 + \cfrac{1}{a_3 + \ddots }} \Rightarrow  \\
\frac{p}{q - 2p} -1 = \frac{3p - q}{q - 2p} = (a_2 - 1) +  \cfrac{1}{a_3 + \cfrac{1}{a_4 + \ddots }} \Rightarrow  \\
\frac{q - 2p}{3p - q}  = \cfrac{1}{(a_2 - 1) +  \cfrac{1}{a_3 + \cfrac{1}{a_4 + \ddots }}}. \\
\end{eqnarray*}

Hence, $3p-q$ is the denominator of the continued fraction $[a_2 - 1, a_3, ... , a_m]$ according to the theorem \ref{GH_tma1}. This finding may be generalisable to all degree value $k$, requiring the partition of $[0,1]$ by the levels $\ell_{k-3}$ and $\ell_{k-2}$.

\begin{figure}[H]
\includegraphics[width=1\textwidth]{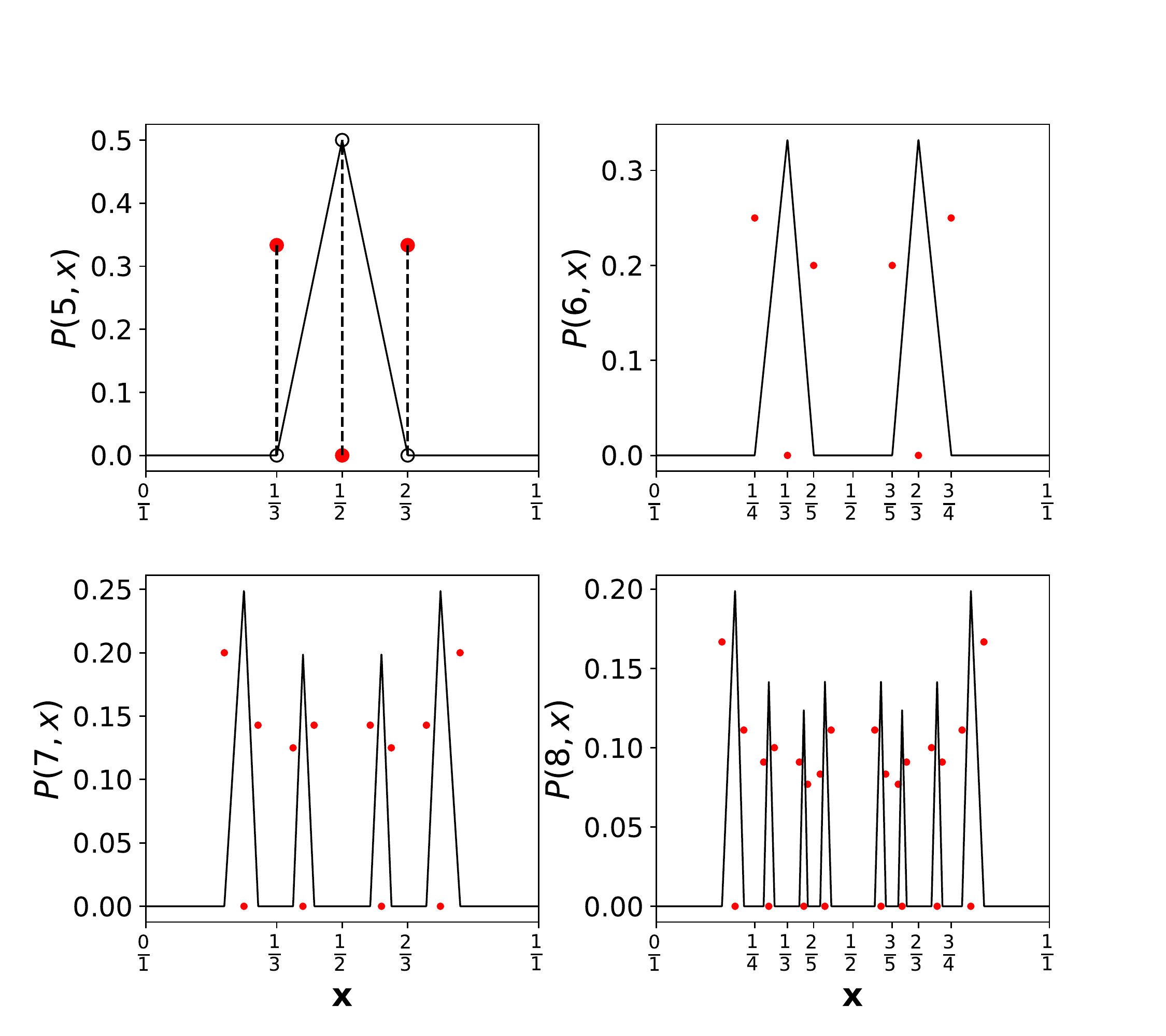}
\caption{Numerical computation of Degree distribution $P(k,x)$ as a function of $x$ for $k = 5, 6, 7, 8$, and for all Haros graphs $G_x$ with $x \in \mathcal{F}_{1000}$. The red points represent removable discontinuities whereas the solid black lines shows the piece-wise linear behaviour. Due to lack of space, only the left upper panel of $P(5,x)$ accurately shows that the red points represent the Haros graph located in levels $\ell_{\kappa}$, for $\kappa = 5 - 3 = 2$, i.e., the Haros graph $G_{1/2}$ (without nodes of degree $k = 5$), and for $\kappa = 5 - 2 = 3$, i.e., the Haros graphs $G_{1/3},G_{2/3}$, where the degree $k = 5$ is located at the boundary node.}
\label{Fig_3}
\end{figure}

\section{Appendix}
\label{sec_Appendix}

\subsection{Proof of Theorem \ref{GH_tma1}}
\label{subsection_proof_tma1}

\begin{proof}
The values $k = 2, 3, 4$ are calculated in \ref{P234}, while the value $k = \sum_{i = 1}^{m} a_i + 2$ corresponds to the degree of the boundary node, which has frequency $1$ according to the the results of Appendix B. of the \cite{HarosPaper}. \\

Let us consider the Haros graph $G_{p/q}$, with $p/q = [a_1, ... , a_m]$. Thus, the symbolic path in the Haros graph Tree is  $L^{a_1}R^{a_2} ... X^{a_m - 1}$, with $X = L$ or $X = R$ depending on whether $m$ is odd or even, respectively. In virtue of the Theorem 2 presented in \cite{HarosPaper}, it can be determined that non-zero values of the degree distribution $P(k, p/q)$ correspond to the degrees $k = \sum_{i = 1}^{l} a_i + 3$ for the values $l = 1, 2, ..., m-1$. It remains, however, to determine its precise value.  To accomplish this, we shall conduct an induction exercise on the levels of the Haros graph Tree $\ell_n$: \\

At level $\ell_4$, the first Haros graph with degree $k \geq 5$ is associated to $2/5 =[a_1, a_2] = [2,2]$ proving that:

$$P\left(a_1 + 3, \frac{2}{5} \right) = P\left(5, \frac{2}{5} \right) = \frac{1}{5},$$

the value of the denominator of the continued fraction $[a_2 - 1]=[2-1] = [1] = 1/1$. This level also includes the Haros graph $G_{1/4}$, which fulfils the theorem because it contains only the boundary node with degree $k \geq 5$. \\

Assume the result holds for all levels of the Haros graph tree $\ell_{\kappa}$, with $\kappa \leq n$. Let us verify that it is satisfied at level $\kappa = n + 1$. To accomplish this, consider an arbitrary Haros graph $G_{p/q}$ at level $n$, where $p/q = [a_1, ... , a_m]$. Henceforth, we shall demonstrate  that both its left and right descendants satisfy the Eq. \ref{GH_Pkxeq}. \\

Let us suppose, without loss of generality, that the last descents that result until we reach $G_{p/q}$ are left descents $L$, as seen in Fig. \ref{GH_fig7}. Also, Cvitanovic et al. \cite{Cvitanovic} show that if $p/q = [a_1, ... , a_m]$, then its left descendant is expressed in continued fraction as $[a_1, ... , a_m + 1]$; whereas the right descendant has a continued fraction expression $[a_1, ... , a_m- 1, 2]$. \\

\begin{figure}[htpb]
\begin{center}
\includegraphics[width=1\textwidth, trim=0cm 2.5cm 0cm 30cm,clip=true]{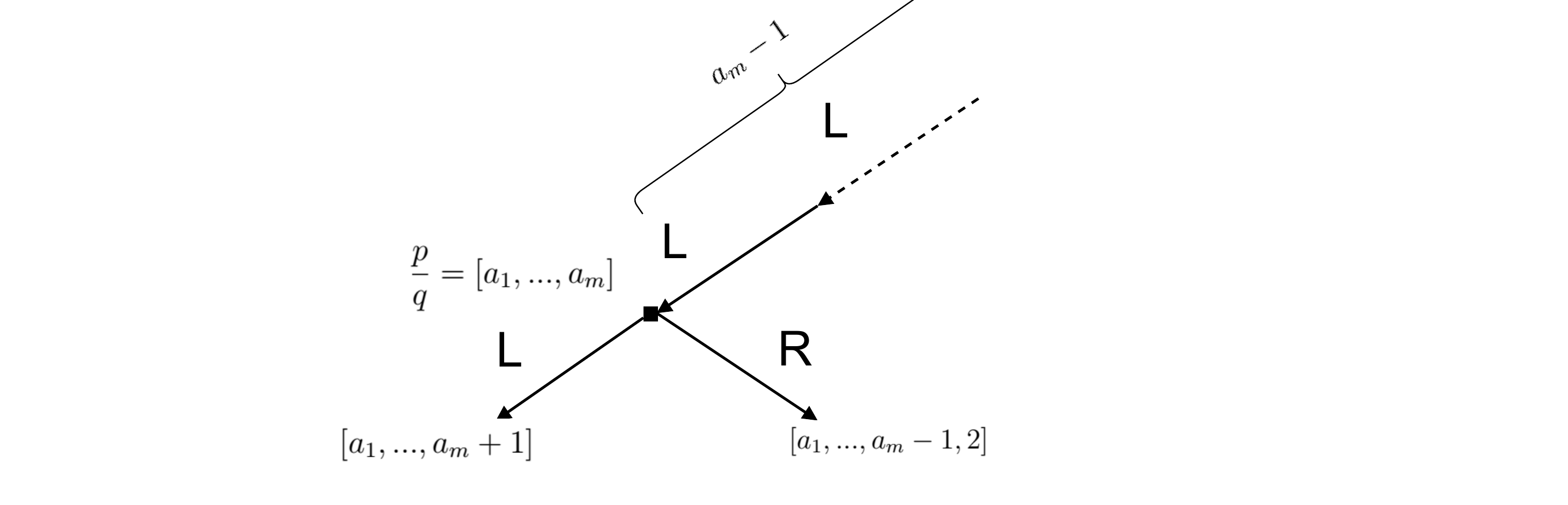}
\caption{Diagram of the descendants of the Haros graph $G_{p/q}$, with $p/q = [a_1, ... , a_m]$. Let us suppose that the Haros graph $G_{p/q}$ contains a symbolic path that terminates in  $a_m - 1$ descents to the left $L$ (the result is analogous for descents to  right $R$). The continued fraction for the left descendant of the Haros graph is $[a_1, ... , a_m + 1]$, whereas the right descendant has a continued fraction $[a_1, ... , a_m- 1, 2]$.}
\label{GH_fig7}
\end{center}
\end{figure} 
  
Let us start by dealing with the left descendant. This case is depicted in Fig. \ref{GH_fig8}, which illustrates how this descendant is formed by concatenating the convergent of order $m - 1$ of $p/q$ with $p/q$ itself. By setting a value $l \in \left\lbrace 1 , 2 ... , m - 2 \right\rbrace$, therefore we will be in the degree $k_l = \sum_{i = 1}^{l} a_i + 3$. Now, it is clear that the number of nodes with degree $k_l$ in the left descendant is equal to the total of the number of nodes with that degree in its two ascendants $G_{p_{m-1}/q_{m-1}}$ and $G_{p/q}$.\\

\begin{figure}[htpb]
\begin{center}
\includegraphics[width=0.9\textwidth, trim=0cm 3cm 0cm 26cm,clip=true]{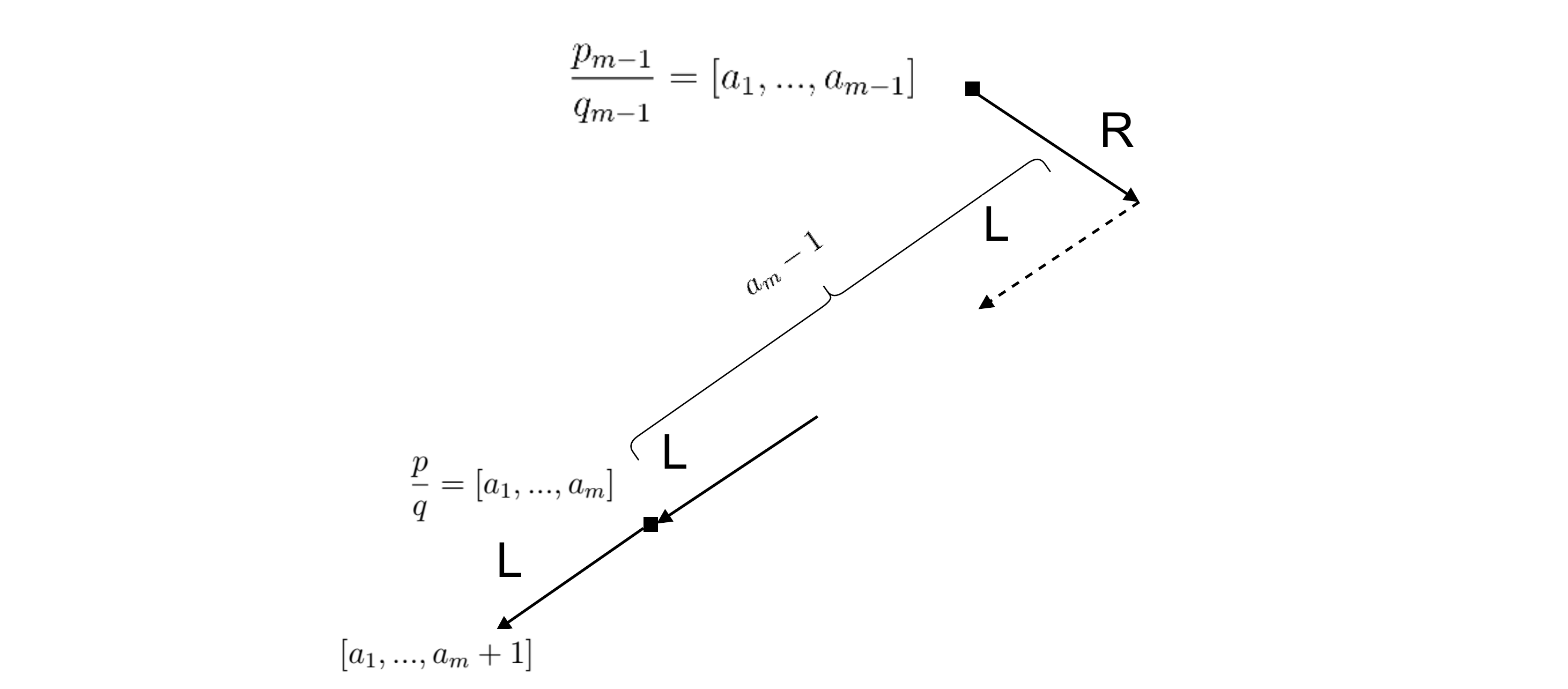}
\caption{Diagram of the construction for the left descendant of $G_{p/q}$, where $p/q = [a_1, ... , a_m]$. The left descendant is expressed as $[a_1, ... , a_m+1]$ and is obtained by concatenating $G_{p_{m-1}/q_{m-1}}$ and $G_{p/q}$, where $p_{m-1}/q_{m-1}$ is the convergent of order $m-1$ of $p/q$.}
\label{GH_fig8}
\end{center}
\end{figure}

By induction hypothesis, the denominators of the continued fractions are the number of nodes of degree $k_l$ of its ascendants, denoted by $s_{p/q}^{(l,m)}$ and $s_{p/q}^{(l,m-1)}$, respectively. Hence, we have: 

\begin{equation}
\frac{r_{p/q}^{(l,m)}}{s_{p/q}^{(l,m)}} :=  [a_{l+1} -1 , ... , a_m],
\label{GH_eq1}
\end{equation}

and
 
\begin{equation}
\frac{r_{p/q}^{(l,m-1)}}{s_{p/q}^{(l,m-1)}} :=   [a_{l+1} -1, ... , a_{m-1}].
\label{GH_eq2}
\end{equation}

This notation also reflects that the term of Eq. \ref{GH_eq2} is the previous convergent of Eq. \ref{GH_eq1}. The recursivity of the continued fractions  (shown in Eq. \ref{Convergentes_recurrencia}) entails that we have:

\begin{equation}
s_{p/q}^{(l,m)} = s_{p/q}^{(l,m-2)} + a_m \cdot s_{p/q}^{(l,m-1)}.
\label{GH_eq3}
\end{equation}

Denoting by $s_{L}^{(l)}$ the number of nodes with degree $k_l$ in the left descendant, we get the conclusion that:

\begin{align}
s_{L}^{(l)} &=& s_{p/q}^{(l,m)} + s_{p/q}^{(l,m-1)} = \left(s_{p/q}^{(l,m-2)} + a_m \cdot s_{p/q}^{(l,m-1)}\right) + s_{p/q}^{(l,m-1)} \nonumber \\
&=& s_{p/q}^{(l,m-2)} + (a_m + 1) \cdot s_{p/q}^{(l,m-1)},
\label{GH_eq4}
\end{align}

where the right-hand side of Eq. \ref{GH_eq4} is the denominator of the continued fraction $[a_{l+1} - 1, ..., a_{m} + 1]$, as we aimed to demonstrate. \\

The value $l = m - 1$ is required to complete the study of the left descendant; therefore, it is necessary to check the result for the degree $k_{m-1} = \sum_{i = 1}^{m-1} a_i + 3$. The reason for separate consideration is that $G_{p_{m-1}/q_{m-1}}$ does not contain the degree $k_{m-1}$. Nonetheless, this degree first appears at the level  $\sum_{i = 1}^{m-1} a_i + 1$ of the Haros graph tree, i.e., as the boundary node of the right descendant of $G_{p_{m-1}/q_{m-1}}$. The number of nodes with that degree will increase by one with each successive left descent until the Haros graph $G_{p/q}$ is reached.  Moreover, the induction hypothesis applied to $p/q$ determines that there are $s_{p/q}^{(m-1,m)}$ nodes of this degree, i.e., the denominator of

$$[a_m - 1] = \frac{1}{a_m - 1}.$$

Therefore, there will be $s_{p/q}^{(m-1,m)} + 1$ nodes, or equivalently, there will be $a_m$ nodes of this degree. This number is the denominator of the truncated continued fraction $[(a_m + 1) - 1] = [a_m]$, thus finishing the study of left descent.  \\ 

Let us now turn our attention to the right descendant, depicted in Fig. \ref{GH_fig9}. In this instance, the concatenation occurs between the Haros graph $G_{p/q}$ and its right ancestor, i.e., the Haros graph associated to the continued fraction $[a_1, ... , a_m -1]$.   \\

\begin{figure}[H]
\begin{center}
\includegraphics[width=1\textwidth, trim=0cm 3cm 0cm 30cm,clip=true]{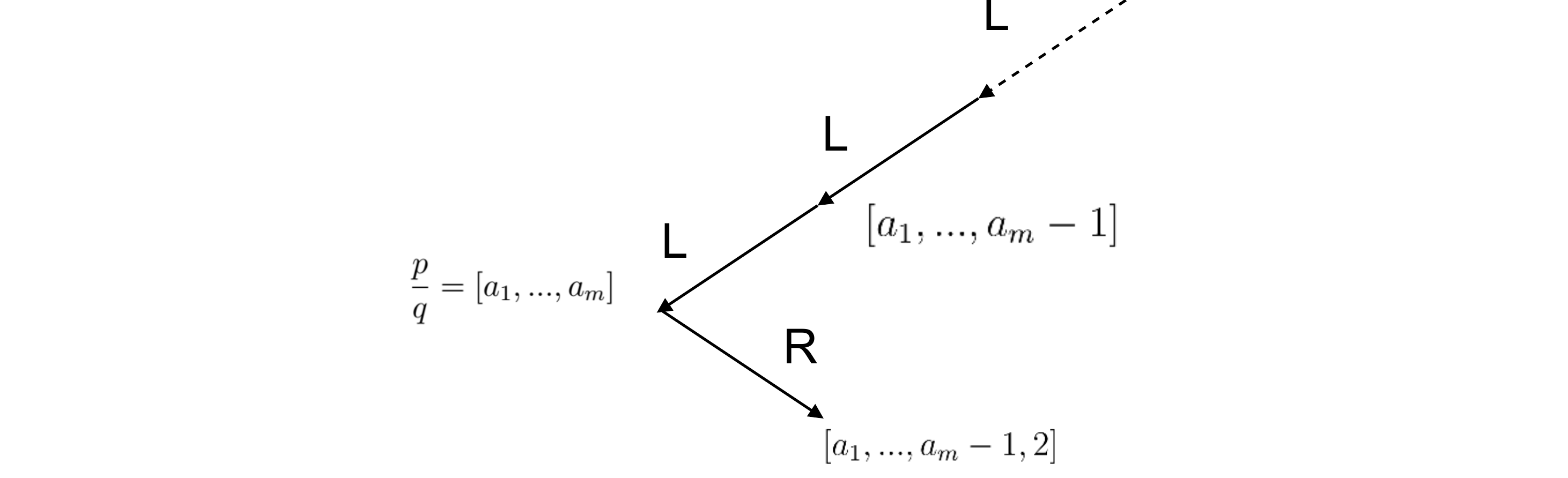}
\caption{Diagram of the construction for the right descendant of $G_{p/q}$, where $p/q = [a_1, ... , a_m]$. The right descendant is expressed as $[a_1, ... , a_m+1]$ and is obtained by concatenating the Haros graphs $G_{p_{m-1}/q_{m-1}}$ and $G_{p/q}$, where $p_{m-1}/q_{m-1}$ is the $m-1$-th convergent of $p/q$.}
\label{GH_fig9}
\end{center}
\end{figure}

Let us consider the degrees $k_l = \sum_{i = 1}^{l} a_i + 3$ with  $l \in \left\lbrace 1 , 2, ... , m - 1 \right\rbrace$. The right descendant is obtained by concatenating $G_{p/q} \oplus G_{a/b}$, where $\frac{a}{b} = [a_1, ... , a_m - 1]$. Therefore, we must represent the truncations of $p/q$ and $a/b$, as well as their convergents, which are indicated as follows:

\begin{equation}
\frac{r_{p/q}^{(l,m)}}{s_{p/q}^{(l,m)}} =  [a_{l+1} -1 , ... ,a_{m-1}, a_m],
\label{GH_eq5}
\end{equation}

and
 
\begin{equation}
\frac{r_{a/b}^{(l,m)}}{s_{a/b}^{(l,m)}} =   [a_{l+1} -1, ... ,a_{m-1}, a_{m} - 1].
\label{GH_eq6}
\end{equation}

It is easy to show that the continued fractions in Eq. \ref{GH_eq5} and Eq. \ref{GH_eq6} match in all the terms with the exception of the last one; hence, all the denominators of the convergent with $t \leq m - 1$, verify that $s_{p/q}^{(l,t)} = s_{a/b}^{(l,t)}$. Using the induction hypothesis, the recursive equations \ref{Convergentes_recurrencia} for the convergent, and denoting by $s_{R}^{(l)}$ the number of nodes of degree $k_l$ in the right descendant, we have:
\begin{eqnarray}
s_{R}^{(l)} & = & s_{p/q}^{(l,m)} + s_{a/b}^{(l,m)} \nonumber \\
       & = & (s_{p/q}^{(l,m-2)} + (a_m) \cdot s_{p/q}^{(l,m-1)}) + (s_{a/b}^{(l,m-2)} + (a_m - 1) \cdot s_{a/b}^{(l,m-1)}) \nonumber \\
       & = & (s_{a/b}^{(l,m-2)} + (a_m) \cdot s_{a/b}^{(l,m-1)}) + (s_{a/b}^{(l,m-2)} + (a_m - 1) \cdot s_{a/b}^{(l,m-1)}) \nonumber \\
       & = & 2\cdot s_{a/b}^{(l,m-2)} + 2\cdot (a_m - 1) \cdot s_{a/b}^{(l,m-1)} +  s_{p/q}^{(l,m-1)} \nonumber \\
       & = & 2 ( s_{a/b}^{(l,m-2)} + \cdot (a_m - 1) \cdot s_{a/b}^{(l,m-1)}) +  s_{a/b}^{(l,m-1)} \nonumber \\
       & = & 2 \cdot s_{a/b}^{(l,m)} +  s_{a/b}^{(l,m-1)},
\label{GH_eq7}
\end{eqnarray}

where the right-hand side of Eq. \ref{GH_eq7} is the denominator of the truncated continued fraction $[a_{l+1} -1, ... ,a_{m-1}, a_{m} - 1,2]$. Finally, the right descent has a single node with degree

$$k = \sum_{i = 1}^{m-1} a_i + (a_m - 1) + 3 = \sum_{i = 1}^{m} a_i + 2.$$ 

This is the merging node obtained by the concatenating the extreme nodes of $p/q$ and $a/b$, in agreement with the denominator of the last truncated right descendant, which is $[2 - 1] = [1] = 1/1$, thereby ending the proof.
\end{proof}

\subsection{Proof of Theorem \ref{GH_tma2}}
\label{subsection_proof_tma2}

In order to demonstrate the result, we must introduce the continuants, a recursively defined polynomials stated by Euler, where:

\begin{eqnarray}
K_0 & = & 1;  \nonumber \\
K_1(x_1) & = & x_1;   \nonumber \\
K_n(x_1, ..., x_n) & = & x_n \cdot K_{n-1}(x_1, ... , x_{n-1}) + K_{n-2}(x_1, ... , x_{n-2}), \;  \text{ for } \; n \geq 2.
\label{GH_cont}
\end{eqnarray}
The continuants allow express the continued fractions as

\begin{equation}
\frac{p}{q} = [a_1, ... , a_n] = \frac{K_{n-1}(a_2, ... , a_n)}{K_n(a_1, ... , a_n)}  .
\label{GH_contpq}
\end{equation}

In addition, two properties of the continuants must be introduced. The first generalises the definition \ref{GH_cont} $\forall m$, with $1 \leq m < n$ as follows: 

\begin{align}
K_n(x_1, ..., x_n) &=& K_m(x_1, ..., x_m) \cdot K_{n-m}(x_{m+1}, ..., x_n)  \nonumber \\
& + &K_{m-1}(x_1, ..., x_{m-1})\cdot  K_{n-m-1}(x_{m+2}, ..., x_n). 
\label{GH_contgen}
\end{align}

The second equality was established by Muir and Metzler in \cite{Muir}:

\begin{equation}
K_n(x_1, ..., x_n) \cdot K_{n-2}(x_2, ..., x_{n-1}) - K_{n-1}(x_1, ..., x_{n-1}) \cdot K_{n-1}(x_2, ..., x_{n}) = (-1)^n .
\label{GH_contdet}
\end{equation} 

\begin{proof}
The intervals where the degree distribution $P(k,p/q) > 0$ are determined by the elements $a_i/b_i \in \ell_{k-3}$, i.e., the Farey fractions of a certain level in the Farey tree, and their descendants located at a lower level $p_{2i - 1}/q_{2i-1} \in  \ell_{k-2}$. Let us first establish that 

$$\frac{a_i}{b_i} = [\alpha_1, ... , \alpha_r],$$

\noindent then, if $r$ is even, we obtain
\begin{equation}
\frac{p_{2i - 1}}{q_{2i-1}} = [\alpha_1, ... , \alpha_r + 1]; \; \frac{p_{2i}}{q_{2i}} = [\alpha_1, ... , \alpha_r -1, 2],
\label{GH_desc_impar}
\end{equation}
whereas for $r$ odd we have:
\begin{equation}
\frac{p_{2i}}{q_{2i}} = [\alpha_1, ... , \alpha_r + 1]; \; \frac{p_{2i-1}}{q_{2i-1}} = [\alpha_1, ... , \alpha_r -1, 2].
\label{GH_desc_par}
\end{equation}
 \\
 Let us suppose, without loss of generality, the case $r$ odd, and consider a rational number 
$$
\frac{p}{q} \in \left(\frac{p_{2i-1}}{q_{2i-1}} = [\alpha_1, ... , \alpha_r + 1] ,\frac{a_i}{b_i} = [\alpha_1, ... , \alpha_r] \right).
$$
Then, the first $r$ terms of the continued fraction are determined as follows:

$$\frac{p}{q} = [\alpha_1, ... , \alpha_r, b_{r+1}, ... , b_m].$$

Furthermore, if  $a_i/b_i = [\alpha_1, ... , \alpha_r] \in \ell_{k-3}$, then $k = \sum_{i= 1}^{r} \alpha_i + 3$. Let us verify that the numerator of the expression $q_{2i-1} \cdot (p/q) - p_{2i-1}$, stated in theorem \ref{GH_tma2} and the term $s^{(r)}$ stated in theorem \ref{GH_tma1}, have the same value for the degree $k = \sum_{i= 1}^{r} \alpha_i  + 3$. Applying Eq. \ref{GH_contpq} and Eq. \ref{GH_contgen} to  $p/q$ and $p_{2i-1}/q_{2i-1}$, we get:

\begin{eqnarray}
q_{2i-1} \cdot p  - q \cdot p_{2i-1} &=& K(\alpha_1, ... , \alpha_r +1) \cdot K(\alpha_2, ... , \alpha_r, b_{r+1}, ..., b_m) \nonumber \\
& & -K(\alpha_1, ... , \alpha_r, b_{r+1}, ..., b_m) \cdot K(\alpha_2, ... , \alpha_r + 1) \nonumber \\
& = & K(\alpha_1, ... , \alpha_r +1) \cdot \bigl[  K(\alpha_2, ... , \alpha_r)\cdot  K(b_{r+1}, ..., b_m) \bigr. \nonumber \\
&  & +\bigl. K(\alpha_2, ... , \alpha_{r-1}) \cdot K(b_{r+2}, ..., b_m) \bigr] \nonumber \\
&  & - K(\alpha_2, ... , \alpha_{r} + 1)\cdot \bigl[ K(\alpha_1, ... , \alpha_r)\cdot  K(b_{r+1}, ..., b_m) \bigr. \nonumber \\
&  & +\bigl. K(\alpha_1, ... , \alpha_{r-1}) \cdot K(b_{r+2}, ..., b_m) \bigr] \nonumber \\
& = & K(b_{r+1}, ..., b_m) \cdot \bigl[ K(\alpha_1, ... , \alpha_r +1) \cdot K(\alpha_2, ... , \alpha_{r}) \bigr. \nonumber \\
& & \bigl. - K(\alpha_2, ... , \alpha_{r}+1) \cdot K(\alpha_1, ... , \alpha_r) \bigr] \nonumber \\
& & + K(b_{r+2}, ..., b_m) \cdot \bigl[ K(\alpha_1, ... , \alpha_r +1) \cdot K(\alpha_2, ... , \alpha_{r-1}) \bigr. \nonumber \\
& & \bigl. - K(\alpha_2, ... , \alpha_{r}+1) \cdot K(\alpha_1, ... , \alpha_{r-1}) \bigr].
\label{GH_conteq1}
\end{eqnarray}

Using the definition \ref{GH_cont} for $K(\alpha_1, ... , \alpha_r +1)$, we obtain:
\begin{equation}
K(\alpha_1, ... , \alpha_r +1) = K(\alpha_1, ... , \alpha_r) + K(\alpha_1, ... , \alpha_{r-1}),
\label{GH_conteq2}
\end{equation}

and, applying the Eq. \ref{GH_contdet}, we get the following value for the first bracket in Eq. \ref{GH_conteq1}:
\begin{eqnarray*}
K(\alpha_1, ... , \alpha_r +1) \cdot K(\alpha_2, ... , \alpha_{r}) - K(\alpha_2, ... , \alpha_{r}+1) \cdot K(\alpha_1, ... , \alpha_r) \\
= K(\alpha_1, ... , \alpha_r)\cdot K(\alpha_2, ... , \alpha_{r}) + K(\alpha_1, ... , \alpha_{r-1})\cdot K(\alpha_2, ... , \alpha_{r}) \\
- K(\alpha_2, ... , \alpha_r)\cdot K(\alpha_1, ... , \alpha_{r}) - K(\alpha_2, ... , \alpha_{r-1})\cdot K(\alpha_1, ... , \alpha_{r}) \\
 = K(\alpha_1, ... , \alpha_{r-1})\cdot K(\alpha_2, ... , \alpha_{r}) - K(\alpha_2, ... , \alpha_{r-1})\cdot K(\alpha_1, ... , \alpha_{r}) \\
 = (-1)^{r+1} = 1.
\label{GH_conteq3}
\end{eqnarray*}

In addition, for the second bracket in Eq. \ref{GH_conteq1}, we have:

\begin{eqnarray*}
K(\alpha_1, ... , \alpha_r +1) \cdot K(\alpha_2, ... , \alpha_{r-1}) - K(\alpha_2, ... , \alpha_{r}+1) \cdot K(\alpha_1, ... , \alpha_{r-1}) \\
= K(\alpha_1, ... , \alpha_r)\cdot K(\alpha_2, ... , \alpha_{r-1}) + K(\alpha_1, ... , \alpha_{r-1})\cdot K(\alpha_2, ... , \alpha_{r-1}) \\
- K(\alpha_2, ... , \alpha_r)\cdot K(\alpha_1, ... , \alpha_{r-1}) - K(\alpha_2, ... , \alpha_{r-1})\cdot K(\alpha_1, ... , \alpha_{r-1}) \\
 = K(\alpha_1, ... , \alpha_r)\cdot K(\alpha_2, ... , \alpha_{r-1}) -
  K(\alpha_2, ... , \alpha_r)\cdot K(\alpha_1, ... , \alpha_{r-1}) \\
 = (-1)^{r} = -1.
\label{GH_conteq4}
\end{eqnarray*}

Therefore, the Eq. \ref{GH_conteq1} is simplified as follows:
\begin{eqnarray}
q_{2i-1} \cdot p  - q \cdot p_{2i-1} = K(b_{r+1}, ..., b_m) -  K(b_{r+2}, ..., b_m),
\label{GH_conteq5}
\end{eqnarray}
where the right-hand side is:
\begin{eqnarray}
K(b_{r+1}, ..., b_m) -  K(b_{r+2}, ..., b_m) \nonumber \\
= b_{r+1}\cdot K(b_{r+2}, ..., b_m) + K(b_{r+3}, ..., b_m) -  K(b_{r+2}, ..., b_m) \nonumber \\
= (b_{r+1}-1)\cdot K(b_{r+2}, ..., b_m) + K(b_{r+3}, ..., b_m) \nonumber \\
= K(b_{r+1}- 1, b_{r+2}, ..., b_m),
\label{GH_conteq6}
\end{eqnarray}

i.e., it is the denominator of the truncated continued fraction $[b_{r+1} - 1, ... , b_m]$, as we aimed to demonstrate. Observe that if $r$ is even, $p_{2i}/q_{2i}$ would have a continued fraction expression $[\alpha_1, ... , \alpha_r + 1]$, and the argumentation would be valid with the opposite sign $-q_{2i} \cdot p  + q \cdot p_{2i}$. \\

It remains to verify the result for rational numbers: 
$$
\frac{p}{q} \in \left(\frac{a_i}{b_i} = [\alpha_1, ... , \alpha_r], \frac{p_{2i}}{q_{2i}} = [\alpha_1, ... , \alpha_r - 1,2] , \right).
$$
In that case, the first $r$ terms of the continued fraction of $p/q$ are determined as:

 $$\frac{p}{q} = [\alpha_1, ... , \alpha_r - 1,1, b_{r+2}, ... , b_m].$$
 
Reproducing the scheme of the demonstration in the last case, we get

\begin{eqnarray}
-q_{2i} \cdot p  + q \cdot p_{2i} &=& -K(\alpha_1, ... , \alpha_r -1, 2) \cdot K(\alpha_2, ... , \alpha_r-1,1, b_{r+2}, ..., b_m) \nonumber \\
& & +K(\alpha_1, ... , \alpha_r-1,1, b_{r+2}, ..., b_m) \cdot K(\alpha_2, ... , \alpha_r -1,2) \nonumber \\
& = & -K(\alpha_1, ... , \alpha_r -1, 2) \cdot \bigl[  K(\alpha_2, ... , \alpha_r-1,1)\cdot  K(b_{r+2}, ..., b_m) \bigr. \nonumber \\
&  & +\bigl. K(\alpha_2, ... , \alpha_r -1) \cdot K(b_{r+3}, ..., b_m) \bigr] \nonumber \\
&  & + K(\alpha_2, ... , \alpha_r -1,2)\cdot \bigl[ K(\alpha_1, ... , \alpha_r-1,1)\cdot  K(b_{r+2}, ..., b_m) \bigr. \nonumber \\
&  & +\bigl. K(\alpha_1, ... , \alpha_r-1) \cdot K(b_{r+3}, ..., b_m) \bigr] \nonumber \\ 
& = & -K(b_{r+2}, ..., b_m) \cdot \bigl[ K(\alpha_1, ... , \alpha_r -1,2) \cdot K(\alpha_2, ... , \alpha_{r}) \bigr. \nonumber \\
& & \bigl. - K(\alpha_2, ... , \alpha_{r}-1,2) \cdot K(\alpha_1, ... , \alpha_r) \bigr] \nonumber \\
& & + K(b_{r+3}, ..., b_m) \cdot \bigl[ K(\alpha_2, ... , \alpha_r -1,2) \cdot K(\alpha_1, ... , \alpha_{r}-1) \bigr. \nonumber \\
& & \bigl. - K(\alpha_1, ... , \alpha_{r-1},2) \cdot K(\alpha_2, ... , \alpha_{r}-1) \bigr].
\label{GH_conteq7}
\end{eqnarray}

Using the definition \ref{GH_cont} for $K(\alpha_1, ... , \alpha_{r}-1,2)$, we have:

\begin{eqnarray}
K(\alpha_1, ... , \alpha_{r}-1,2) &=& 2\cdot K(\alpha_1, ... , \alpha_{r}-1) + K(\alpha_1, ... , \alpha_{r-1}) \nonumber \\
&=& 2\cdot \left[(\alpha_{r}-1)\cdot K(\alpha_1, ... , \alpha_{r-1}) + K(\alpha_1, ... , \alpha_{r-2})\right]  + K(\alpha_1, ... , \alpha_{r-1})\nonumber \\
&=& 2\cdot(\alpha_{r})\cdot K(\alpha_1, ... , \alpha_{r-1}) - 2\cdot K(\alpha_1, ... , \alpha_{r-1}) \nonumber \\
& &+ 2\cdot K(\alpha_1, ... , \alpha_{r-2}) + K(\alpha_1, ... , \alpha_{r-1}) \nonumber \\
&=& 2\cdot(\alpha_{r})\cdot K(\alpha_1, ... , \alpha_{r-1}) -  K(\alpha_1, ... , \alpha_{r-1}) + 2\cdot K(\alpha_1, ... , \alpha_{r-2}) \nonumber \\
&=& 2\cdot K(\alpha_1, ... , \alpha_{r}) -  K(\alpha_1, ... , \alpha_{r-1}). 
\label{GH_conteq8}
\end{eqnarray}

The first bracket in Eq. \ref{GH_conteq7} is now:
\begin{eqnarray*}
K(\alpha_1, ... , \alpha_r -1,2) \cdot K(\alpha_2, ... , \alpha_{r}) - K(\alpha_2, ... , \alpha_{r}-1,2) \cdot K(\alpha_1, ... , \alpha_r) \\
= 2\cdot K(\alpha_1, ... , \alpha_{r})\cdot K(\alpha_2, ... , \alpha_{r}) -  K(\alpha_1, ... , \alpha_{r-1})\cdot K(\alpha_2, ... , \alpha_{r}) \\
- 2\cdot K(\alpha_2, ... , \alpha_{r})\cdot K(\alpha_1, ... , \alpha_{r}) + K(\alpha_2, ... , \alpha_{r-1})\cdot K(\alpha_1, ... , \alpha_{r}) \\
 = K(\alpha_2, ... , \alpha_{r-1})\cdot K(\alpha_1, ... , \alpha_{r}) -  K(\alpha_1, ... , \alpha_{r-1})\cdot K(\alpha_2, ... , \alpha_{r}) \\
 = (-1)^{r} = -1,
\label{GH_conteq9}
\end{eqnarray*}

In addition, the second bracket requires the following equality:

$$
K(\alpha_1, ... , \alpha_{r}-1) = K(\alpha_1, ... , \alpha_{r}) - K(\alpha_1, ... , \alpha_{r-1}),
$$
resulting therefore that:

\begin{eqnarray*}
K(\alpha_2, ... , \alpha_r -1,2) \cdot K(\alpha_1, ... , \alpha_{r}-1) - K(\alpha_1, ... , \alpha_{r}-1,2) \cdot K(\alpha_2, ... , \alpha_{r}-1)\\
= 2\cdot K(\alpha_2, ... , \alpha_{r})\cdot K(\alpha_1, ... , \alpha_{r}-1) - K(\alpha_2, ... , \alpha_{r-1})\cdot K(\alpha_1, ... , \alpha_{r}-1) \\
- 2\cdot K(\alpha_1, ... , \alpha_{r})\cdot K(\alpha_2, ... , \alpha_{r}-1) + K(\alpha_1, ... , \alpha_{r-1})\cdot K(\alpha_2, ... , \alpha_{r}-1) \\
= 2\cdot K(\alpha_2, ... , \alpha_{r})\cdot K(\alpha_1, ... , \alpha_{r}) 
- 2\cdot K(\alpha_2, ... , \alpha_{r})\cdot K(\alpha_1, ... , \alpha_{r-1}) \\
- K(\alpha_2, ... , \alpha_{r-1}) \cdot K(\alpha_1, ... , \alpha_{r}) 
+ K(\alpha_2, ... , \alpha_{r-1})\cdot K(\alpha_1, ... , \alpha_{r-1}) \\
-2\cdot K(\alpha_1, ... , \alpha_{r})\cdot K(\alpha_2, ... , \alpha_{r}) 
+ 2\cdot K(\alpha_1, ... , \alpha_{r})\cdot K(\alpha_2, ... , \alpha_{r-1}) \\
+ K(\alpha_1, ... , \alpha_{r-1}) \cdot K(\alpha_2, ... , \alpha_{r}) 
- K(\alpha_1, ... , \alpha_{r-1})\cdot K(\alpha_2, ... , \alpha_{r-1}) \\
= K(\alpha_1, ... , \alpha_{r})\cdot K(\alpha_2, ... , \alpha_{r-1}) - K(\alpha_1, ... , \alpha_{r-1}) \cdot K(\alpha_2, ... , \alpha_{r}) \\
= (-1)^r = -1
\label{GH_conteq10}
\end{eqnarray*}

Thus, Eq. \ref{GH_conteq7} is simplified as:
\begin{eqnarray*}
-q_{2i} \cdot p  + q \cdot p_{2i} = K(b_{r+2}, ..., b_m) - K(b_{r+3}, ..., b_m) = K(b_{r+2}- 1, b_{r+3}, ..., b_m),
\label{GH_conteq11}
\end{eqnarray*}

i.e., it is the denominator of the continued fraction $[b_{r+2} - 1, ... , b_m]$, or equivalently, $s^{(r+1)}$ for $k = \sum_{i=1}^{r-1} \alpha_i +(\alpha_r -1) + 1  + 3 = \sum_{i=1}^{r} \alpha_i  + 3$, as we want to demonstrate. To conclude, if $r$ is even, then $p_{2i-1}/q_{2i-1} = [\alpha_1, ... , \alpha_r -1, 2]$, and changing the sign to equality $q_{2i-1} \cdot p  - q \cdot p_{2i-1}$ will result in equivalent reasoning.
\end{proof}
\section*{Acknowledgments}
The author acknowledges funding from Spanish Ministry of Science and Innovation under project M2505 (PID2020-113737GB-I00). I am grateful to Bartolom\'e Luque and Lucas Lacasa for their guidance and advice.

\newpage


\bibliographystyle{ieeetr}
\bibliography{On_a_degree_distribution_of_Haros_graphs_rev.bib}

\end{document}